\newtheorem*{mainthm}{Theorem}
\newtheorem{cor}{Corollary}
\newcommand{\et}{\quad\mbox{and}\quad}
\newcommand{\bQ}{\mathbb{Q}}
\newcommand{\bR}{\mathbb{R}}
\newcommand{\bZ}{\mathbb{Z}}
\newcommand{\disp}{\displaystyle}
\newcommand{\omegal}{\omega^{\mathrm{lead}}}
\newcommand{\Qbar}{\overline{\bQ}}
\newcommand{\Sl}{S^{\mathrm{lead}}}
\newcommand{\ux}{\mathbf{x}}
\begin{document}

\title{A transference inequality for rational approximation to points in geometric progression}

\author{J\'er\'emy Champagne}
\address{
   D\'epartement de Math\'ematiques\\
   Universit\'e d'Ottawa\\
   150 Louis Pasteur\\
   Ottawa, Ontario K1N 6N5, Canada}
\email{jcham016@uottawa.ca}
\author{Damien Roy}
\address{
   D\'epartement de Math\'ematiques\\
   Universit\'e d'Ottawa\\
   150 Louis Pasteur\\
   Ottawa, Ontario K1N 6N5, Canada}
\email{droy@uottawa.ca}
\subjclass[2010]{Primary 11J13; Secondary 11J82, 11J83}
\thanks{Work of the authors partially supported by NSERC}

\keywords{exponents of Diophantine approximation,
  Hausdorff dimension, polynomials, transference inequalities.}

\begin{abstract}
\noindent We establish a transference inequality conjectured by Badziahin and Bugeaud
relating exponents of rational approximation of points in geometric progression.
\end{abstract}

\maketitle

Let $\xi\in\bR$.  For each integer $n\ge 1$, one defines
$\omega_n(\xi)$ as the supremum of all $\omega\in\bR$  for which there exist
infinitely many non-zero polynomials $P(x)\in\bZ[x]$ of degree at most $n$ with
\[
 |P(\xi)|\le \|P\|^{-\omega},
\]
where $\|P\|$ stands for the largest absolute value
of the coefficients of $P$.  These quantities form the basis of Mahler's
classification of numbers \cite{Ma1932}.  Dually, one defines $\lambda_n(\xi)$ as
the supremum of all $\lambda\in\bR$ for which there exist infinitely many
non-zero points $\ux=(x_0,\dots,x_n)\in\bZ^{n+1}$ such that
\[
 \max_{1\le m\le n} |x_0\xi^m-x_m| \le \|\ux\|^{-\lambda},
\]
where $\|\ux\|$ stands for the maximum norm of $\ux$.

Suppose now that $\xi$ is transcendental over $\bQ$.  In \cite{BB2019},
D.~Badziahin and Y.~Bugeaud prove that for any integers $k$, $n$
with $2\le k\le n$, we have
\begin{equation}
 \label{eq1}
 \lambda_n(\xi)\ge \frac{\omegal_k(\xi)-n+k}{(k-1)\omegal_k(\xi)+n}
\end{equation}
where $\omegal_k(\xi)$ is defined as $\omega_k(\xi)$ but by restricting to polynomials
$P(x)\in\bZ[x]$ of degree at most $k$ whose coefficient $c_k(P)$ of $x^k$ has largest
absolute value $|c_k(P)|=\|P\|$.  They conjecture that this inequality remains true if
$\omegal_k(\xi)$ is replaced by $\omega_k(\xi)$ and they prove this is indeed the case
if $k=2$ or $k=n-1$.  Their proof for $k=2$ is based on the fact that $\lambda_n(\xi)=
\lambda_n(1/\xi)$.  The purpose of this note is to prove this conjecture based on their
inequality \eqref{eq1}, the invariance of $\lambda_n$ and $\omega_k$ by general
fractional linear transformations with rational coefficients, and the following observation.

\begin{mainthm}
Let $k\ge 1$ be an integer and let $r_0,\dots,r_k$ be distinct integers.
There is an integer $M\ge 1$ such that, for each
$\xi\in\bR$, there exists at least one index $i\in\{0,1,\dots,k\}$,
with $r_i\neq\xi$, for which the point $\xi_i=1/(M(\xi-r_i))$
satisfies $\omegal_k(\xi_i)=\omega_k(\xi_i)=\omega_k(\xi)$.
\end{mainthm}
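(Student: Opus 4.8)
The plan is to reduce the asserted triple equality to two separate facts and handle them in turn. Since the class of polynomials counted by $\omegal_k$ is a subclass of those counted by $\omega_k$, one always has $\omegal_k(\eta)\le\omega_k(\eta)$; thus it suffices to produce, for a single well-chosen index $i$ with $r_i\neq\xi$, both the identity $\omega_k(\xi_i)=\omega_k(\xi)$ and the lower bound $\omegal_k(\xi_i)\ge\omega_k(\xi)$. The first is immediate from the hypotheses: $\xi\mapsto\xi_i=1/(M(\xi-r_i))$ is a fractional linear transformation with rational coefficients and nonzero determinant $-M$, so the invariance of $\omega_k$ under such maps (granted in the excerpt) gives $\omega_k(\xi_i)=\omega_k(\xi)$ for every admissible $i$. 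All the work therefore goes into the leading-coefficient lower bound.

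For the second fact I would make the substitution explicit. Writing $P(x)=\sum_{j=0}^k c_jx^j$ with $c_j\in\bZ$ and using $\xi=r_i+1/(M\xi_i)$, multiplying $P(\xi)$ through by $(M\xi_i)^k$ yields
\[
 Q_i(y):=\sum_{j=0}^{k}c_j\,(Mr_iy+1)^j(My)^{k-j}\in\bZ[y],
 \qquad Q_i(\xi_i)=(M\xi_i)^k\,P(\xi).
\]
From this formula I would read off two structural facts. First, the coefficient of $y^m$ in $Q_i$ equals $M^m$ times a $\bZ$-linear combination of the $c_j$ whose absolute value is at most $C_2\norm{P}$, with $C_2=C_2(k,r_0,\dots,r_k)$ independent of $M$. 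Second, the leading coefficient of $Q_i$ is exactly $M^kP(r_i)$. In particular $\deg Q_i\le k$ and $Q_i\neq0$ whenever $P(r_i)\neq0$.

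The crux is to force this leading coefficient to be the largest in absolute value. Lagrange interpolation at the distinct points $r_0,\dots,r_k$ expresses the coefficients of $P$ as fixed linear combinations of the values $P(r_0),\dots,P(r_k)$, whence $\norm{P}\le C\max_j|P(r_j)|$ for some $C=C(k,r_0,\dots,r_k)$; so for every $P$ there is an index $i$ with $|P(r_i)|\ge\norm{P}/C$. Comparing with the first structural fact, for that index the $m=k$ coefficient has absolute value $M^k|P(r_i)|\ge M^k\norm{P}/C$, while the coefficient of $y^m$ for $m<k$ is at most $M^mC_2\norm{P}\le M^{k-1}C_2\norm{P}$. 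Hence once $M\ge CC_2$ the leading coefficient dominates and $|c_k(Q_i)|=\norm{Q_i}$. The main difficulty lives precisely here: it is the balancing of the $M^m$-scaling of the coefficient of $y^m$ against the interpolation lower bound $|P(r_i)|\ge\norm{P}/C$, together with the observation that a single $M=\lceil CC_2\rceil$, depending only on $k$ and the $r_j$ and not on $\xi$ or $P$, works for all of them simultaneously.

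Finally I would select the index by pigeonhole. Fix a sequence $P^{(s)}$ of nonzero integer polynomials of degree at most $k$ with $\norm{P^{(s)}}\to\infty$ and $|P^{(s)}(\xi)|\le\norm{P^{(s)}}^{-\omega_s}$, where $\omega_s$ increases to $\omega_k(\xi)$ and $\omega_s\ge1$ (possible since $\omega_k(\xi)\ge\omega_1(\xi)\ge1$). For each $s$ pick an index $i_s$ with $|P^{(s)}(r_{i_s})|\ge\norm{P^{(s)}}/C$; some value $i$ recurs infinitely often. This $i$ automatically satisfies $r_i\neq\xi$, for if $r_i=\xi$ then $|P^{(s)}(r_i)|=|P^{(s)}(\xi)|\to0$, contradicting $|P^{(s)}(r_i)|\ge\norm{P^{(s)}}/C\to\infty$. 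Along the corresponding subsequence the polynomials $Q_i$ have $|c_k(Q_i)|=\norm{Q_i}$, satisfy $\norm{Q_i}\asymp\norm{P^{(s)}}\to\infty$ (upper bound from the first structural fact, lower bound from $\norm{Q_i}\ge|c_k(Q_i)|$), and obey $|Q_i(\xi_i)|=|M\xi_i|^k|P^{(s)}(\xi)|$. Converting $\norm{P^{(s)}}^{-\omega_s}$ into a power of $\norm{Q_i}$ costs only a bounded factor, so these leading-coefficient polynomials witness exponents tending to $\omega_k(\xi)$, giving $\omegal_k(\xi_i)\ge\omega_k(\xi)=\omega_k(\xi_i)$ and hence equality throughout.
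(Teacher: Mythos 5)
Your proposal is correct and follows essentially the same route as the paper's proof: the same substitution $Q(y)=(My)^kP(1/(My)+r_i)$, the same Lagrange-interpolation bound $\norm{P}\le C\max_j|P(r_j)|$ (the paper's constant $C_1$), the same choice $M\ge CC_2$ to make the leading coefficient dominate, and the same pigeonhole selection of $i$ along an extremal sequence. Your explicit justification that $r_i\neq\xi$ is a welcome detail the paper leaves implicit, but there is no substantive difference in approach.
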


\begin{proof}
We first note that there exists positive constants $C_1$
and $C_2$ such that
\[
 \|P\|\le C_1\max\{|P(r_i)|\,;\,0\le i\le k\}
 \et
 \max\{\|P(x+r_i)\|\,;\,0\le i\le k\} \le C_2 \|P\|
\]
for any polynomial $P\in\bR[x]$ of degree at most $k$.
Let $\xi\in\bR$. Choose an integer $M$ with $M\ge C_1C_2$ and
a sequence of polynomials $(P_j)_{j\ge 1}$ in $\bZ[x]$
of degree at most $k$ with strictly increasing norms such that
\begin{equation}
 \label{eq2}
 \lim_{j\to\infty} -\frac{ \log|P_j(\xi)|}{\log\|P_j\|} = \omega_k(\xi).
\end{equation}
Then, choose $i\in\{0,1,\dots,k\}$ such that $\|P_j\|\le C_1 |P_j(r_i)|$ for an
infinite set $S$ of positive integers $j$, and set
\[
 Q_j(x)=(Mx)^kP_j\Big(\frac{1}{Mx}+r_i\Big) \in \bZ[x]
\]
for each $j\in S$.  For those values of $j$, the absolute value of the coefficient
of $x^k$ in $Q_j(x)$ is $|c_k(Q_j)|=M^k|P_j(r_i)|$ while its other coefficients
have absolute value at most
\[
 M^{k-1}\|P_j(x+r_i)\|
  \le C_2M^{k-1}\|P_j\|
  \le C_1C_2M^{k-1} |P_j(r_i)|
  \le M^{k} |P_j(r_i)|,
\]
thus $|c_k(Q_j)|=\|Q_j\|$.  We also have $r_i\neq\xi$ and
\[
 |Q_j(\xi_i)|=(M|\xi_i|)^k|P_j(\xi)|
\]
where $\xi_i=1/(M(\xi-r_i))$.  As the ratio $\|Q_j\|/\|P_j\|$ is
bounded from above and from below by positive constants, we
deduce from \eqref{eq2} that $-\log|Q_j(\xi_i)|/\log\|Q_j\|$
converges to $\omega_k(\xi)$ as $j$ goes to infinity in $S$.
Altogether, this means that $\omegal_k(\xi_i)\ge \omega_k(\xi)$.
However, it is well known (and easy to prove) that
$\omega_k(\xi)=\omega_k(\xi_i)$ because $\xi_i$ is the image
of $\xi$ by a linear fractional transformation with rational
coefficients.  The conclusion follows because
$\omega_k(\xi_i)\ge\omegal_k(\xi_i)$ by the very definition
of $\omegal_k$.
\end{proof}

Applying the formula \eqref{eq1} with $\xi$ replaced by $\xi_i$ and using the fact
that $\lambda_n(\xi_i)=\lambda_n(\xi)$, we reach the desired inequality.

\begin{cor}
$\disp \lambda_n(\xi)\ge \frac{\omega_k(\xi)-n+k}{(k-1)\omega_k(\xi)+n}$\quad
$(\xi\notin\Qbar \ \text{and}\ 2\le k\le n)$.
\end{cor}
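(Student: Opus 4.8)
The plan is to use the Main Theorem as a device that converts the leading-coefficient exponent $\omegal_k$ appearing in \eqref{eq1} into the full exponent $\omega_k$, at the price of passing from $\xi$ to a carefully chosen companion point. First I would fix any $k+1$ distinct integers $r_0,\dots,r_k$, say $r_i=i$, and invoke the Main Theorem to obtain an integer $M\ge 1$ together with an index $i\in\{0,\dots,k\}$ such that the point $\xi_i=1/(M(\xi-r_i))$ satisfies
\[
 \omegal_k(\xi_i)=\omega_k(\xi_i)=\omega_k(\xi).
\]
Since $\xi\notin\Qbar$ and each $r_i$ is an integer, one automatically has $r_i\neq\xi$, so $\xi_i$ is well defined; moreover $\xi_i$ is the image of $\xi$ under a fractional linear transformation with rational coefficients, hence is itself transcendental over $\bQ$.

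Next I would apply the Badziahin--Bugeaud inequality \eqref{eq1} to the transcendental point $\xi_i$, which is legitimate because $2\le k\le n$, obtaining
\[
 \lambda_n(\xi_i)\ge \frac{\omegal_k(\xi_i)-n+k}{(k-1)\omegal_k(\xi_i)+n}.
\]
Substituting the exact equality $\omegal_k(\xi_i)=\omega_k(\xi)$ supplied by the Main Theorem turns the right-hand side into the target expression, while the invariance of $\lambda_n$ under fractional linear transformations with rational coefficients gives $\lambda_n(\xi_i)=\lambda_n(\xi)$. Chaining these two observations yields
\[
 \lambda_n(\xi)=\lambda_n(\xi_i)\ge \frac{\omega_k(\xi)-n+k}{(k-1)\omega_k(\xi)+n},
\]
which is exactly the claimed inequality.

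I would stress that the argument is genuinely stronger than a naive application of \eqref{eq1} at $\xi$ itself: one always has $\omegal_k(\xi)\le\omega_k(\xi)$, possibly with strict inequality, and the function $t\mapsto (t-n+k)/((k-1)t+n)$ is strictly increasing on the relevant range, since the numerator of its derivative equals $k(n-k+1)>0$ for $2\le k\le n$. The entire gap between the two exponents is bridged by relocating to the point $\xi_i$ where they coincide, and this is precisely what the Main Theorem guarantees. Consequently I expect no obstacle in the deduction of the corollary itself: the one substantive step, namely producing a rational-coefficient fractional linear image on which $\omegal_k$ and $\omega_k$ agree while $\omega_k$ and $\lambda_n$ are preserved, has already been carried out, and what remains is a direct substitution using an equality rather than an inequality, so no monotonicity estimate is needed in the final step.
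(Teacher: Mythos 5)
Your proposal is correct and follows exactly the paper's own (one-sentence) deduction: apply the Main Theorem to produce $\xi_i$ with $\omegal_k(\xi_i)=\omega_k(\xi_i)=\omega_k(\xi)$, invoke \eqref{eq1} at the transcendental point $\xi_i$, and transfer back via $\lambda_n(\xi_i)=\lambda_n(\xi)$. The closing remark about monotonicity is harmless but, as you yourself note, unnecessary since the substitution uses an equality rather than an inequality.
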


We thank Yann Bugeaud and Victor Beresnevich for pointing out that our theorem 
formalizes principles that are implicit in Lemmas 3 and 4 of \cite{Ba1976} as well 
as on pages 25--26 of \cite{Sp1969} (for the choice of $r_i=i$). The next corollary 
is an application to metrical theory that was suggested by Yann Bugeaud.

\begin{cor}
Let $k\ge 1$ be an integer and let $w\in\bR$.  The sets
$S=\{\xi\in\bR\,;\,\omega_k(\xi)\ge w\}$ and
$\Sl=\{\xi\in\bR\,;\,\omegal_k(\xi)\ge w\}$ have the same Hausdorff
dimension.  This remains true if we replace the large inequalities
$\ge $ by strict inequalities $>$ in the definitions of $S$ and $\Sl$. 
\end{cor}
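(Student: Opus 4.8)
The plan is to combine the main Theorem with three standard properties of Hausdorff dimension: monotonicity under inclusion, finite (indeed countable) stability, and invariance under bi-Lipschitz maps. One inclusion is free. Since $\omega_k(\xi)\ge\omegal_k(\xi)$ for every $\xi\in\bR$ by the very definition of $\omegal_k$, we have $\Sl\subseteq S$, and therefore $\dim_H\Sl\le\dim_H S$. All the content lies in the reverse inequality $\dim_H S\le\dim_H\Sl$.

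For that direction I would fix distinct integers $r_0,\dots,r_k$ (say $r_i=i$) and let $M\ge 1$ be the integer supplied by the Theorem. For each index $i\in\{0,\dots,k\}$ consider the linear fractional map $\phi_i(x)=1/\bigl(M(x-r_i)\bigr)$, so that $\xi_i=\phi_i(\xi)$ in the notation of the Theorem. Let $S_i$ be the set of $\xi\in S$ for which the index $i$ realizes the conclusion of the Theorem, i.e.\ $r_i\neq\xi$ and $\omegal_k(\phi_i(\xi))=\omega_k(\xi)$. By the Theorem, every $\xi\in S$ belongs to some $S_i$, so $S=\bigcup_{i=0}^{k}S_i$. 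Moreover, if $\xi\in S_i$ then $\omegal_k(\phi_i(\xi))=\omega_k(\xi)\ge w$, so $\phi_i(\xi)\in\Sl$; that is, $\phi_i(S_i)\subseteq\Sl$.

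It then remains to compare the dimensions of $S_i$ and $\phi_i(S_i)$. Each $\phi_i$ is a real-analytic diffeomorphism of $\bR\setminus\{r_i\}$ onto $\bR\setminus\{0\}$ with non-vanishing derivative $\phi_i'(x)=-1/\bigl(M(x-r_i)^2\bigr)$, hence is bi-Lipschitz on each piece $\{x\in\bR\,;\,1/n\le|x-r_i|\le n\}$. Writing $S_i$ as the countable union of its intersections with these pieces, and using that bi-Lipschitz maps preserve Hausdorff dimension together with the countable stability of $\dim_H$, I obtain $\dim_H\phi_i(S_i)=\dim_H S_i$, whence $\dim_H S_i\le\dim_H\Sl$ by monotonicity. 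Finally, finite stability gives $\dim_H S=\max_{0\le i\le k}\dim_H S_i\le\dim_H\Sl$, which together with the first paragraph yields the claimed equality. The strict-inequality version is verbatim the same: for $\xi\in S_i$ with $\omega_k(\xi)>w$ one again gets $\omegal_k(\phi_i(\xi))=\omega_k(\xi)>w$, and the identical chain of inequalities applies with $\ge$ replaced by $>$ throughout.

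I expect the only real subtlety to be the bi-Lipschitz step: since $S_i$ may be unbounded or may accumulate at the pole $r_i$, the maps $\phi_i$ are not globally bi-Lipschitz, and one must pass through the decomposition into countably many bounded pieces bounded away from $r_i$ before invoking countable stability. Everything else—the finite cover $S=\bigcup_i S_i$ and the inclusions $\phi_i(S_i)\subseteq\Sl$—is an immediate bookkeeping consequence of the Theorem.
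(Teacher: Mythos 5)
Your proposal is correct and follows essentially the same route as the paper: the inclusion $\Sl\subseteq S$ in one direction, and the covering $S\subseteq\bigcup_{i=0}^{k}\phi_i^{-1}(\Sl)$ furnished by the Theorem combined with the invariance of Hausdorff dimension under fractional linear maps in the other. The paper simply cites that invariance as known, whereas you spell out the standard bi-Lipschitz decomposition justifying it; this is a difference of detail, not of method.
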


\begin{proof}
We have $\Sl\subset S$ and, for an appropriate choice of integers
$r_0,\dots,r_k$ and $M$, the theorem gives
$S\subseteq \bigcup_{i=0}^k \tau_i^{-1}(\Sl)$ where
$\tau_i(\xi)=1/(M(\xi-r_i))$ for each
$\xi\in\bR\setminus\{r_i\}$ $(0\le i\le k)$.
The conclusion follows by the invariance of the Hausdorff
dimension under fractional linear transformations.
\end{proof}


\begin{thebibliography}{10}
%
\bibitem{BB2019}
    D.~Badziahin and Y.~Bugeaud,
    On simultaneous rational approximation to a real number and its integral powers II,
    preprint, 2019, 17 pages; arXiv:1906.05508 [math.NT].
%
\bibitem{Ba1976}
    R.~C.~Baker,
    Sprind\v{z}uk's theorem and Hausdorff dimension,
    {\it Mathematika} {\bf 23} (1976), 184--197.
%
\bibitem{Ma1932}
    K.~Mahler,
    Zur Approximation der Exponentialfunktion und des Logarithmus. Teil I,
    {\it J.~Reine Angew.\ Math.\ }{\bf 166} (1932), 118--136.
%
\bibitem{Sp1969} V.~G.~Sprind\v{z}uk, 
    {\it Mahler's problem in metric number theory}, 
    Translations of Mathematical Monographs, Vol.~25, American Mathematical Society, 
    Providence, R.I. 1969, vii+192 pp.
%
\end{thebibliography}
\end{document}